
\documentclass[12pt]{amsproc}
\usepackage{eurosym}
\usepackage{amssymb}
\usepackage{amsfonts}

\setcounter{MaxMatrixCols}{10}

\theoremstyle{plain}

\newtheorem{corollary}{Corollary}

\newtheorem{lemma}{Lemma}

\newtheorem{theorem}{Theorem}
\numberwithin{equation}{section}

\begin{document}
\title{On-line list coloring of matroids}
\author{Micha\l\ Laso\'{n}}
\address{Institute of Mathematics of the Polish Academy of Sciences, 00-956 Warszawa, Poland}
\address{Theoretical Computer Science Department, Faculty of Mathematics and
Computer Science, Jagiellonian University, 30-348 Krak\'{o}w, Poland}

\email{michalason@gmail.com}
\thanks{M.~Laso\'n is supported by Polish National Science Centre under grant no N~N206 568240.}

\author{Wojciech Lubawski}
\address{Theoretical Computer Science Department, Faculty of Mathematics and
Computer Science, Jagiellonian University, 30-348 Krak\'{o}w, Poland}
\address{Institute of Mathematics of the Polish Academy of Sciences, 00-956 Warszawa, Poland}
\email{w.lubawski@gmail.com}
\thanks{W.~Lubawski is supported by joint programme SSDNM}

\keywords{}

\begin{abstract}
A coloring of a matroid is \emph{proper} if elements of the same color form
an independent set. A theorem of Seymour asserts that a $k$-colorable
matroid is also colorable from any lists of size $k$. In this note we
generalize this theorem to the on-line setting. We prove that a coloring of a
matroid from lists of size $k$ is possible even if appearances of colors in the lists are recovered
color by color by an adversary, while our job is to assign a color immediately after it is recovered. We also prove a more general weighted version of our
result with lists of varying sizes. In consequence we get a simple necessary
and sufficient condition for matroid list colorability in general case. The
main tool we use is the multiple basis exchange property, which we give a simple proof.
\end{abstract}

\maketitle

\section{Introduction}

Let $M$ be a loopless matroid on a ground set $E$. A coloring of the set $E$
is \emph{proper} if elements of the same color form an independent set of $M$%
. The \emph{chromatic number} of $M$, denoted by $\chi (M)$, is the minimum
number of colors needed to color properly the set $E$. In case
of a graphic matroid $M=M(G)$, the number $\chi (M)$ is a well studied
parameter known as the \emph{arboricity} of the underlying graph $G$.

In \cite{se} Seymour considered the following list coloring problem for
matroids, in analogy to the list coloring of graphs. By a simple application
of the matroid union theorem he proved that matroidal version of the choice
number stays the same as the chromatic number.

\begin{theorem}
\emph{(Seymour \cite{se}) }Suppose that every element $e\in E$ of a matroid $%
M$ is assigned a set of colors \thinspace $L(e)$ of size at least $\chi (M)$%
. Then there is a proper coloring $c$ of $M$ satisfying $c(e)\in L(e)$ for
each $e\in E$.
\end{theorem}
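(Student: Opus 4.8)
The plan is to recast list colorability as a feasibility question for a \emph{matroid union} and then verify the resulting rank inequality by a short averaging argument. Write $k=\chi(M)$ and let $C=\bigcup_{e\in E}L(e)$ be the set of all colors occurring in the lists. For a color $c\in C$ set $E_c=\{e\in E:\ c\in L(e)\}$, the elements allowed to take color $c$. A proper coloring with colors from the lists is precisely a choice, for each $c\in C$, of a set $I_c\subseteq E_c$ that is independent in $M$, such that the $I_c$ partition $E$ (an element $e$ receives color $c$ exactly when $e\in I_c$). First I would encode this: let $N_c$ be the copy of $M$ living on $E_c$, with every element of $E\setminus E_c$ declared a loop, so that a set is independent in $N_c$ iff it is a subset of $E_c$ independent in $M$. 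Then a proper list coloring exists precisely when $E$ can be split into sets independent in the respective $N_c$, i.e.\ when $E$ is independent in the union $N=\bigvee_{c\in C}N_c$.

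Next I would apply the matroid union theorem to $N$. Its rank satisfies
\[
r_N(E)=\min_{T\subseteq E}\Bigl(|E\setminus T|+\sum_{c\in C}r_{N_c}(T)\Bigr)
=\min_{T\subseteq E}\Bigl(|E\setminus T|+\sum_{c\in C}r_M(T\cap E_c)\Bigr),
\]
where $r_{N_c}(T)=r_M(T\cap E_c)$ because the elements outside $E_c$ are loops. Hence $E$ is independent in $N$, and the required coloring exists, if and only if
\[
\sum_{c\in C}r_M(T\cap E_c)\ \ge\ |T|\qquad\text{for every }T\subseteq E;
\]
call this inequality $(*)$. Everything now comes down to deriving $(*)$ from the two hypotheses $\chi(M)=k$ and $|L(e)|\ge k$.

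To prove $(*)$ I would fix an optimal proper coloring of $M$, that is, a partition $E=B_1\sqcup\cdots\sqcup B_k$ into independent sets. Fix $T\subseteq E$ and a color $c$. Since each $B_i$ is independent and $T\cap E_c\cap B_i\subseteq B_i$, monotonicity of rank gives
\[
r_M(T\cap E_c)\ \ge\ \max_{1\le i\le k}r_M(T\cap E_c\cap B_i)=\max_{1\le i\le k}|T\cap E_c\cap B_i|\ \ge\ \frac1k\sum_{i=1}^{k}|T\cap E_c\cap B_i|=\frac1k\,|T\cap E_c|,
\]
the last equality because the $B_i$ partition $E$. Summing over $c$ and double counting the incidences,
\[
\sum_{c\in C}r_M(T\cap E_c)\ \ge\ \frac1k\sum_{c\in C}|T\cap E_c|=\frac1k\sum_{e\in T}|L(e)|\ \ge\ \frac1k\cdot k|T|=|T|,
\]
which is exactly $(*)$.

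I expect the only genuinely nontrivial step to be the very first one: recognizing that list colorability is literally independence of $E$ in the union of the ``color matroids'' $N_c$, so that the matroid union theorem converts the combinatorial problem into the single rank inequality $(*)$. Once $(*)$ is isolated, the estimate is a one-line averaging: every element lies in at least $k$ of the sets $E_c$, while $\chi(M)=k$ lets one replace each rank by a cardinality inside an independent color class, and the two occurrences of $k$ cancel. Note that nothing beyond $|L(e)|\ge\chi(M)$ is used, so the same computation should serve as the template for the weighted and on-line refinements announced in the abstract.
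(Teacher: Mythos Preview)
Your argument is correct and is essentially Seymour's original proof: encode list colorability as independence in the union $\bigvee_{c}N_c$, invoke the matroid-union rank formula, and verify the resulting inequality by averaging over a fixed $k$-coloring. The paper, however, does not prove Theorem~1 this way; it only cites Seymour for this statement and instead establishes the stronger on-line Theorem~\ref{equal} (via Theorem~\ref{main}), from which Seymour's theorem follows as the trivial off-line special case. The paper's route is an inductive exchange argument: Lemma~\ref{ex} (a two-set symmetric exchange, itself proved with matroid union) feeds Lemma~\ref{aaa}, which produces the independent set Alice should pick in a single round, and Theorem~\ref{main} is then a straightforward induction on total weight. So both approaches rest on the matroid union theorem, but yours applies it once globally to the full list assignment, while the paper applies it only locally to prove an exchange lemma that can be iterated round by round.

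One caveat on your closing remark: the claim that ``the same computation should serve as the template for the weighted and on-line refinements'' is too optimistic for the on-line case. Your global encoding requires knowing all lists in advance to form the union matroid $N$, which is precisely what Alice does not have in the on-line game; this is why the paper needs the exchange-based Lemma~\ref{aaa} rather than a single application of the rank formula. For the \emph{weighted off-line} version your template does extend (replace the partition into $I_c$'s by a fractional cover and count with multiplicity), but the on-line strengthening genuinely requires the additional inductive idea.
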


In this paper we prove the \emph{on-line} version of Seymour's theorem.
Consider the following game played by Alice and Bob on a matroid $M$, in
analogy to the graph coloring game introduced by Schauz \cite{sc} (cf. \cite%
{zh}). Let $\mathbb{N}=\{1,2,3,\ldots \}$ be the set of colors, and let $k$
be a fixed positive integer. In the first round Bob chooses arbitrary
non-empty subset $B_{1}\subseteq E$ and inserts color $1$ to the lists of
all elements of $B_{1}$. Then Alice chooses some independent set $%
A_{1}\subseteq B_{1}$ and colors its elements by color $1$. In the second
round Bob picks arbitrarily a non-empty subset $B_{2}\subseteq E$ and
inserts color $2$ to the lists of all elements of $B_{2}$. Then Alice
chooses an independent subset $A_{2}\subseteq B_{2}\setminus A_{1}$ and
colors its elements with color $2$. And so on, until all lists will have
exactly $k$ elements. If at the end of the play the whole matroid is
colored, then Alice is the winner. In the opposite case, Bob is the winner.
Let $\widetilde{\chi }(M)$ denote the minimum number $k$ guaranteeing a win
for Alice.

Our main result reads as follows.

\begin{theorem}\label{equal}
Every matroid $M$ satisfies $\widetilde{\chi }(M)=\chi (M)$.
\end{theorem}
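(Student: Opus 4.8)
The inequality $\widetilde{\chi}(M)\ge\chi(M)$ is the easy direction: if $k<\chi(M)$, let Bob play $B_i=E$ in every round. After $k$ rounds every list equals $\{1,\dots,k\}$ and the game ends, so to have colored all of $E$ Alice must have split it into the $k$ independent sets $A_1,\dots,A_k$, forcing $\chi(M)\le k$, a contradiction. Hence it remains to exhibit a winning strategy for Alice when $k=\chi(M)$, and I would do this by maintaining an invariant. Write $U$ for the set of currently uncolored elements, and for $e\in U$ call $b(e)=k-|L(e)|$ its \emph{budget}: since the game ends only once every list has size $k$, this is exactly the number of future rounds in which $e$ will still be offered a color. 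If an uncolored element ever reaches $b(e)=0$ it can no longer be colored and Alice loses, so the entire task is to color each element before its budget expires.

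Fix, using $\chi(M)=k$, a partition $E=I_1\sqcup\cdots\sqcup I_k$ into independent sets. The invariant I would keep (holding at the start of each round) is that $U$ is partitioned into independent sets $S_1,\dots,S_k$ with $S_1\cup\cdots\cup S_t\supseteq\{e\in U:\,b(e)\le t\}$ for every $t$; equivalently, $e\in S_j$ forces $b(e)\ge j$. By the matroid union theorem this is the same as the covering condition that, for every $t$, the set $\{e\in U:\,b(e)\le t\}$ can be covered by $t$ independent sets. Initially $U=E$, all budgets equal $k$, and $S_j=I_j$ satisfies the invariant trivially. The invariant is precisely what secures the win: it forces every uncolored element to have budget at least $1$, so at the moment the game ends, when all budgets are $0$, there can be no uncolored elements and Alice has won. (Its offline specialization, taking $B_i$ to be the set of elements whose list contains color $i$, recovers Seymour's theorem.)

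The substance is restoring the invariant after a move. Suppose it holds and Bob offers $B\subseteq U$, lowering the budget of each $e\in B$ by one. The \emph{forced} elements, those of new budget $0$, are exactly the budget-$1$ elements of $B$, and by the case $t=1$ of the invariant all of them lie in the independent set $S_1$; thus the forced set is independent and Alice may legally color it. Alice then chooses an independent $A\subseteq B$ containing the forced elements, together with enough of the newly arrived elements at each level (the elements whose budget dropped from $t+1$ to $t$, which also lie in $B$) to keep every level coverable, and re-partitions $U\setminus A$. The mechanism that makes this possible is the multiple basis exchange property: an element whose budget fell from $t$ to $t-1$ may now sit one level too high in $S_t$ and must be pushed down, yet $S_{t-1}$ together with it need not be independent, so Alice instead pushes all such elements down one level \emph{simultaneously}, swapping them against elements of the lower level while keeping every $S_j$ independent, the displaced elements cascading upward into levels whose budget constraint they still meet.

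The main obstacle, and the reason the multiple basis exchange property is the right tool rather than single exchanges applied one at a time, is showing that this budget-respecting cascade can always be completed: moving one element down perturbs the level below it, and one must exchange across all affected levels at once, coloring only elements of the current offer $B$ and only an independent set of them, while simultaneously re-establishing the covering condition at every threshold $t$. I expect the proof of the exchange lemma, and the verification that its conclusion supplies exactly the simultaneous swaps the cascade requires, to be where essentially all the work lies; the remaining bookkeeping (independence of each new $S_j$, the deadline inclusions, and that $A\subseteq B$) is then routine.
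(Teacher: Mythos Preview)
Your approach is essentially the paper's. The invariant you propose---a partition of the uncolored set into independent sets $S_1,\dots,S_k$ with $e\in S_j\Rightarrow b(e)\ge j$---is exactly the specialization to $\mathbf{w}\equiv 1$ of the paper's condition ``colorable from lists $\{1,\dots,\ell(e)\}$'', and the paper likewise maintains it round by round via a cascade built from the generalized exchange property (their Lemma~1/Lemma~2), then reads off Theorem~\ref{equal} as the special case $\mathbf{w}=(1,\dots,1)$, $\ell=(k,\dots,k)$ of the general statement. Your lower bound and your identification of the key lemma are both on target.

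One point to correct when you carry out the cascade: the direction you describe is inverted. If you push an offered element from $S_t$ down into $S_{t-1}$ and send a displaced element of $S_{t-1}$ ``upward'' into $S_t$, that displaced element only had budget $\ge t-1$ and so need not satisfy the level-$t$ constraint; the cascade stalls. The paper's mechanism runs the other way: starting from $S_1$, it pushes the $B$-elements \emph{up} the indices (using Lemma~1 at each step to swap them into the next level while pulling some elements of that level down), and the set that falls out at the top, $I=X_k\subseteq B$, is what Alice colors. Condition~(2) of their Lemma~2 then says precisely that every surviving element's new index is at most its old index minus $\mathbf{c}_B(e)$, which is your invariant restored. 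So the bookkeeping you anticipate is indeed routine once the cascade is oriented correctly.
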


The proof relies on a multiple basis exchange property. Actually we prove a
more general result, in which we allow for lists of varying sizes and
coloring by sets of colors. This also gives the fractional version of the theorem 
(for fractional on-line list coloring of graphs see \cite{gu}).

\section{The proof}

We will need some notation. Let $\mathcal{P}(\mathbb{N})$ denote the family
of all subsets of the set of positive integers $\mathbb{N}$ (we use $\mathbb{%
N}$ as the set of colors as well as the set of numbers). Let $\mathbf{w}%
:E\rightarrow \mathbb{N}$ be an assignment of \emph{weights} to the elements
of a matroid $M$. A $\mathbf{w}$\emph{-coloring} of a matroid $M$ is a
function $W:E\rightarrow \mathcal{P}(\mathbb{N})$ such that every coloring $c
$ satisfying $c(e)\in W(e)$ is a proper coloring of $M$. Let $\ell:E\rightarrow 
\mathbb{N}$ be any function and let $L:E\rightarrow \mathcal{P}(\mathbb{N})$
be a list assignment of \emph{size} $\ell$, that is, for each $e\in E$ we have $%
\left\vert L(e)\right\vert =\ell(e)$. We say that $M$ is $\mathbf{w}$\emph{%
-colorable from lists} $L$ if there is a $\mathbf{w}$-coloring $W$ of $M$
satisfying condition $W(e)\subseteq L(e)$ for each $e\in E$.

Now we may consider a generalized game on a matroid $M$ with given functions 
$\mathbf{w}$ and $\ell$, which goes in the same way as described in the
introduction, except that the goal of Alice is a $\mathbf{w}$-coloring of $M$
from lists of size $\ell$. If she has a winning strategy, then we say that $M$
is \emph{on-line }$(\mathbf{w},\ell)$\emph{-colorable}.

Our aim is to prove a sufficient condition for the above property. We need
two simple lemmas. The first is a well-known generalized exchange property.
We will prove this lemma for the sake of completeness.

\begin{lemma}
\label{ex} Let $I_{1}$ and $I_{2}$ be independent sets of a matroid $M$.
Then for every $X\subseteq I_{1}$ there exists $Y\subseteq I_{2}$ such that
both sets, $(I_{1}\setminus X)\cup Y$ and $(I_{2}\setminus Y)\cup X$, are
independent.
\end{lemma}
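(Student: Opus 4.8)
The plan is to recast the two independence requirements as a single covering problem on the ground set $I_2$ and then invoke the matroid union theorem. Writing $r$ for the rank function of $M$, I would first observe that, because $I_1\setminus X$ is independent, a set $Y\subseteq I_2$ satisfies ``$(I_1\setminus X)\cup Y$ is independent'' exactly when $Y$ is independent in the contracted-and-restricted matroid $M_1:=\bigl(M/(I_1\setminus X)\bigr)|I_2$; symmetrically, because $X$ is independent, ``$(I_2\setminus Y)\cup X$ is independent'' holds exactly when $I_2\setminus Y$ is independent in $M_2:=\bigl(M/X\bigr)|I_2$. Hence it is enough to cover $I_2$ by a set $Y$ that is independent in $M_1$ together with a set $S_2\supseteq I_2\setminus Y$ that is independent in $M_2$: taking such a $Y$, the complement $I_2\setminus Y\subseteq S_2$ is automatically $M_2$-independent, so both target sets come out independent in $M$.

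By the matroid union (covering) theorem, a cover of $I_2$ by one $M_1$-independent and one $M_2$-independent set exists provided $|Z|\le r_{M_1}(Z)+r_{M_2}(Z)$ for every $Z\subseteq I_2$, and the only computation is to verify this inequality. Expanding the contraction ranks and using that $X$ and $I_1\setminus X$ are disjoint independent sets gives $r_{M_1}(Z)+r_{M_2}(Z)=r\bigl(Z\cup(I_1\setminus X)\bigr)+r(Z\cup X)-|I_1|$. Submodularity of $r$ applied to $A=Z\cup(I_1\setminus X)$ and $B=Z\cup X$, whose union is $Z\cup I_1$ and whose intersection is $Z$, yields $r(A)+r(B)\ge r(Z\cup I_1)+r(Z)$. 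Since $r(Z\cup I_1)\ge r(I_1)=|I_1|$ and $Z\subseteq I_2$ is independent so $r(Z)=|Z|$, these combine to $r_{M_1}(Z)+r_{M_2}(Z)\ge |I_1|+|Z|-|I_1|=|Z|$, which is precisely the covering inequality.

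The main obstacle I anticipate is not the inequality but keeping the correspondence honest for elements of $I_1\cap I_2$: such an element lies in $I_2$ (the common ground set of $M_1$ and $M_2$) while also being contracted in one of the two matroids, where it becomes a loop, so one must check that the loop never needs to be placed on the wrong side of the cover. The safe remedy is to work throughout inside the restriction $M|(I_1\cup I_2)$ and to note that an $M_1$-independent set contains no $M_1$-loop, so the returned $Y$ is automatically disjoint from $I_1\setminus X$; the rank bookkeeping above is unaffected since it only uses that $I_1\setminus X$, $X$, and $Z$ are independent. An alternative, matroid-union-free route is induction on $|X|$: strip off one element $x\in X$, apply the inductive hypothesis, and move $x$ across by a single symmetric exchange, where the crux is the classical fact that a fundamental circuit and a fundamental cocircuit through $x$ must meet in a second element. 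I expect the union-theorem route to be the cleaner one to write out.
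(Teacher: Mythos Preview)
Your plan is correct and follows the same high-level strategy as the paper: verify a submodularity-based rank inequality and then invoke the matroid union (covering) theorem. The technical packaging differs. The paper first contracts $I_1\cap I_2$ to reduce to disjoint $I_1,I_2$, and then covers the larger set $I_1\cup I_2$ by independent sets of the two \emph{restriction} matroids $M|(X\cup I_2)$ and $M|((I_1\setminus X)\cup I_2)$ (extended to the common ground set $I_1\cup I_2$ by loops); the desired $Y$ is read off as $I_2$ intersected with the second piece. You instead cover only $I_2$, using the two \emph{contraction} matroids $(M/(I_1\setminus X))|I_2$ and $(M/X)|I_2$, and handle $I_1\cap I_2$ directly through the loop observation rather than by a preliminary reduction. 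The submodularity step is the same in both. Your variant is a touch more compact (smaller ground set, no separate reduction), at the cost of the bookkeeping about loops you flagged; that bookkeeping is indeed harmless for exactly the reason you gave, since no element of $I_2$ lies in both $I_1\setminus X$ and $X$, so it is never a loop in both auxiliary matroids. Your alternative inductive route via single symmetric exchanges is also sound, but it is not the path the paper takes.
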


\begin{proof}
Let $I=I_{1}\cap I_{2}$. We can restrict to the case where $I=\emptyset $.
Indeed, if $I\neq \emptyset $, then consider matroid $M$ with contracted set 
$I$ and two independent sets $I_{1}\setminus I$, and $I_{2}\setminus
I$. For $X\setminus I_{2}$ we get $Y$, which is also good in the
previous case.

Now let $I_{1}\cap I_{2}=\emptyset $. Let $M_{1}$ be matroid $M$ restricted
to the set $X\cup I_{2}$, and let $M_{2}$ be matroid $M$ restricted to the
set $(I_{1}\setminus X)\cup I_{2}$. Let $I_{1}\cup I_{2}$ be their common
ground set, and denote their rank functions by $r_{1},r_{2}$ respectively.
Observe that for each $A\subseteq I_{1}\cup I_{2}$ we have: 
\begin{equation*}
r_{1}(A)+r_{2}(A)=r(A\cap (X\cup I_{2}))+r(A\cap ((I_{1}\setminus X)\cup
I_{2}))\geq 
\end{equation*}%
\begin{equation*}
\geq r(A\cap (I_{1}\cup I_{2}))+r(A\cap I_{2})\geq \lvert A\cap I_{1}\rvert
+\lvert A\cap I_{2}\rvert =\lvert A\rvert ,
\end{equation*}%
where the first inequality is just a submodularity of a rank function. From
the matroid union theorem (see \cite{ox}) it follows that $I_{1}\cup I_{2}$
can be covered by sets $I_{1}^{\prime },I_{2}^{\prime }$ independent in $%
M_{1}$ and $M_{2}$ respectively, so also in $M$. Now $Y=I_{2}\cap
I_{2}^{\prime }$ is a good choice, since $(I_{1}\setminus X)\cup
Y=I_{2}^{\prime }$ and $(I_{2}\setminus Y)\cup X=I_{1}^{\prime }$.
\end{proof}

As a corollary we get the multiple basis exchange property (see \cite{ku,wo}).

\begin{corollary}
\label{exchange}\emph{(Multiple basis exchange property)} Let $B_{1}$ and $%
B_{2}$ be two bases of a matroid $M$. Then for every $X\subseteq B_{1}$
there exists $Y\subseteq B_{2}$, such that $(B_{1}\setminus X)\cup Y$ and $%
(B_{2}\setminus Y)\cup X$ are also bases.
\end{corollary}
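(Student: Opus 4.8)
The plan is to deduce the statement from Lemma~\ref{ex} by adding a cardinality count on top of it. Write $r=|B_1|=|B_2|$ for the rank of $M$. I want to produce $Y\subseteq B_2$ for which both $A:=(B_1\setminus X)\cup Y$ and $C:=(B_2\setminus Y)\cup X$ are bases. Lemma~\ref{ex} will hand me such a $Y$ with $A$ and $C$ merely \emph{independent}, so the whole task is to upgrade independence to ``basis''. Since an independent set has at most $r$ elements, it suffices to arrange $|A|+|C|\ge 2r$: combined with $|A|,|C|\le r$ this forces $|A|=|C|=r$, i.e.\ both sets are bases.

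First I would settle the case $B_1\cap B_2=\emptyset$. Applying Lemma~\ref{ex} with $I_1=B_1$, $I_2=B_2$ gives $Y\subseteq B_2$ with $A,C$ independent. Now $Y\subseteq B_2$ is disjoint from $B_1$, so the union defining $A$ is disjoint and $|A|=(r-|X|)+|Y|$; symmetrically $X\subseteq B_1$ is disjoint from $B_2$, so $|C|=(r-|Y|)+|X|$. Hence $|A|+|C|=2r$ and the disjoint case is done.

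The remaining obstacle, and the only real point, is the overlap $I:=B_1\cap B_2$: for overlapping bases the $Y$ returned by the lemma can waste rank through an intersection like $(B_1\setminus X)\cap Y$, so the bound $|A|+|C|=2r$ need not survive. I would remove this exactly as Lemma~\ref{ex} itself does, by contracting $I$. In $M/I$ the sets $B_1\setminus I$ and $B_2\setminus I$ are \emph{disjoint} bases, so the previous paragraph applied to $X\setminus I\subseteq B_1\setminus I$ yields $Y'\subseteq B_2\setminus I$ making $((B_1\setminus I)\setminus(X\setminus I))\cup Y'$ and $((B_2\setminus I)\setminus Y')\cup(X\setminus I)$ bases of $M/I$. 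Setting $Y:=Y'\cup(X\cap I)$ then gives $Y\cap I=X\cap I$, which is precisely what keeps all of $I$ inside both $A$ and $C$. A short computation shows that with this $Y$ one has $A=[\,((B_1\setminus I)\setminus(X\setminus I))\cup Y'\,]\cup I$ and likewise for $C$; since a basis of $M/I$ together with $I$ is a basis of $M$, both $A$ and $C$ are bases, which completes the proof.
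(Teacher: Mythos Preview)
Your argument is correct. The cardinality count $|A|+|C|=2r$ in the disjoint case is sound, and the contraction step is handled carefully: the key choice $Y:=Y'\cup(X\cap I)$ is exactly what is needed so that $Y\cap I=X\cap I$, ensuring all of $I$ survives in both $A$ and $C$. The identifications $A=\bigl[((B_1\setminus I)\setminus(X\setminus I))\cup Y'\bigr]\cup I$ and $C=\bigl[((B_2\setminus I)\setminus Y')\cup(X\setminus I)\bigr]\cup I$ check out, and since $I$ is independent, a basis of $M/I$ together with $I$ is indeed a basis of $M$.

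As for comparison with the paper: the paper gives no proof of this corollary at all; it simply records it as a consequence of Lemma~\ref{ex}. Your write-up supplies the missing details. It is worth noting that the naive reading --- apply Lemma~\ref{ex} directly to $B_1,B_2$ and count --- does \emph{not} immediately work when $X\cap I\neq\emptyset$, since the $Y$ produced by the lemma (as proved in the paper) lies in $B_2\setminus I$, and then $|A|+|C|=2r-|X\cap I|$. Your adjustment $Y=Y'\cup(X\cap I)$ is precisely the fix, and it mirrors the contraction trick the paper itself uses in the proof of Lemma~\ref{ex}. So your route is essentially the intended one, just made explicit.
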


We say that a collection of sets $I_{1},\dots ,I_{k}$ is a $\mathbf{w}$\emph{%
-cover} of a set $E$ if for each $e\in E$ we have $\left\vert \{i:e\in
I_{i}\}\right\vert =\mathbf{w}(e)$. For a given subset $U\subseteq E$, let $%
\mathbf{c}_{U}$ denote the characteristic function of $U$, that is, $\mathbf{c}%
_{U}(e)=1$ if $e\in U$ and $\mathbf{c}_{U}(e)=0$, otherwise. Now we prove
the following inductive step lemma.

\begin{lemma}
\label{aaa} Let $I_{1},\ldots ,I_{k}$ be a collection of independent sets in
a matroid $M$ forming a $\mathbf{w}$-cover of its ground set $E$. Then for
every set $V\subseteq E$ there exists an independent set $I\subseteq V$ and
independent sets $I_{1}^{\prime },\dots ,I_{k}^{\prime }$ satisfying the
following conditions.

\begin{enumerate}
\item The sets $I_{1}^{\prime },\dots ,I_{k}^{\prime }$ form a $(\mathbf{w}-%
\mathbf{c}_{I})$-cover of $E$.

\item For each $e\in E$, if $e\in I_{s}^{\prime }$ then $e\in I_{t}$ for
some $t\geq s+\mathbf{c}_{V}(e)$.
\end{enumerate}
\end{lemma}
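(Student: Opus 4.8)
The plan is to induct on the number $k$ of sets, using Lemma~\ref{ex} as the only engine, after first rewriting the two conditions in a more usable form. Since $I\subseteq V$, condition (1) merely says that the new collection must have multiplicity $\mathbf{w}(e)-\mathbf{c}_I(e)$ at each $e$, i.e. we delete exactly one occurrence of every element of $I$. Condition (2) is equivalent to the single inequality $\max\{s:e\in I_s'\}\le \max\{t:e\in I_t\}-\mathbf{c}_V(e)$ for all $e$: the \emph{top} level occupied by an element may never rise, and must strictly drop for elements of $V$. In particular $I_k'\cap V=\varnothing$, while a non-$V$ element may simply stay put. So the entire content is to push the top occurrence of every element of $V$ strictly downward, sending the occurrences that fall off the bottom into the set $I$. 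The base case $k=1$ is immediate: take $I=I_1\cap V$ and $I_1'=I_1\setminus V$.

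For the inductive step I would first clear color $k$. The elements of $V\cap I_k$ must vacate level $k$, so the natural move is to apply Lemma~\ref{ex} to the top pair $I_{k-1},I_k$, exactly as one does by hand in the two-set case. After contracting $V\cap I_{k-1}\cap I_k$ (the part forced into $I$), one applies Lemma~\ref{ex} inside this contraction to the independent sets $I_{k-1}\setminus(V\cap I_{k-1}\cap I_k)$ and $(V\cap I_k)\setminus I_{k-1}$, pulling the non-$V$ part $X=I_{k-1}\setminus V$ across; this splits the remaining elements of $V\cap I_k$ into a piece demoted to level $k-1$ and a piece deleted into $I$, and simultaneously guarantees that the new top set $I_k\setminus V$ and the modified level-$(k-1)$ set are independent. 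Once level $k$ is emptied of $V$, I would apply the induction hypothesis to the modified sets $I_1,\dots,I_{k-2},I_{k-1}'$ with the same $V$. The two instances of condition (2) then compose cleanly: the top step raises no top level, and the recursive step lowers each $V$-top level by one, so the composite still satisfies $\max\{s:e\in I_s'\}\le\max\{t:e\in I_t\}-\mathbf{c}_V(e)$, with room to spare.

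The hard part is bookkeeping the elements of $V$ covered on several levels at once. An element sitting on both levels $k-1$ and $k$ must drop its top occurrence below level $k-1$, so it has to cascade down more than one level and a purely local exchange between $I_{k-1}$ and $I_k$ cannot finish the job for it; this is exactly what forces the step to be threaded through the induction rather than performed level by level in isolation. The second, related difficulty is that the deleted occurrences must assemble into a \emph{single} independent set $I$, whereas pieces extracted at different stages are only independent individually. I would control both issues by carrying out every application of Lemma~\ref{ex} in the contraction of the elements already committed to $I$ (so that each newly deleted element is independent over the previous ones), and by transporting the elements still seeking a home as one pending independent subset of $V$ that is pushed down the chain and finally deposited at the bottom as $I$. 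Keeping this pending set independent while never raising a top level is the only genuinely delicate point; everything else is the submodularity bookkeeping already packaged in Lemma~\ref{ex}.
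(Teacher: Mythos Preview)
Your reformulation of condition~(2) via the top-level inequality is correct. However, the paper takes a much simpler route that dissolves every difficulty you flag.

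Rather than inducting on $k$ from the top, the paper performs one iterative sweep from level $1$ up to level $k$. With $I_1''=I_1$, at stage $i$ one sets $X_i=(V\cap I_i'')\setminus I_{i+1}$, applies Lemma~\ref{ex} to the pair $I_i'',I_{i+1}$ with $X=X_i$ to obtain $Y_{i+1}\subseteq I_{i+1}$, and puts $I_i'=(I_i''\setminus X_i)\cup Y_{i+1}$ and $I_{i+1}''=(I_{i+1}\setminus Y_{i+1})\cup X_i$. At the end $I:=X_k$. Thus the $V$-elements are pushed \emph{upward}; $I$ is whatever arrives at the top and, being a subset of the independent set $I_k''$, is automatically independent---no assembly from pieces is needed. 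Condition~(2) is then a one-line check: any $e\in I_i'$ lies in $I_{i+1}$ when $e\in V$ (either through $Y_{i+1}$ or because $e\in I_i''\cap I_{i+1}$, since $e\notin X_i$), and in $I_i\cup I_{i+1}$ otherwise.

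Your top-down induction, by contrast, genuinely runs into the obstacles you describe, and the proposal does not overcome them. Applying the hypothesis to $I_1,\dots,I_{k-2},I_{k-1}'$ ``with the same $V$'' deletes a \emph{second} occurrence of any element already committed to $I$ at the top step, breaking~(1). Repairing this by contracting the partial $I$ changes the ambient matroid and would require a strengthened induction statement you never formulate; and the ``pending independent subset pushed down the chain'' is precisely the object whose independence needs proof, so invoking it begs the question. The bottom-up sweep avoids all of this because $I$ appears only once, at the very last step, already sitting inside a single independent set.
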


\begin{proof}
Let $X_{1}=(V\cap I_{1})\setminus (I_{1}\cap I_{2})$. By Lemma \ref{ex}
there exists $Y_{2}\subseteq I_{2}$ such that $I_{1}^{\prime
}=(I_{1}\setminus X_{1})\cup Y_{2}$ and $I_{2}^{\prime \prime
}=(I_{2}\setminus Y_{2})\cup X_{1}$ are independent. In general let $%
X_{i}=(V\cap I_{i}^{\prime \prime })\setminus (I_{i}^{\prime \prime }\cap
I_{i+1})$. So again by Lemma \ref{ex} there exists $Y_{i+1}\subseteq I_{i+1}$%
, such that $I_{i}^{\prime }=(I_{i}^{\prime \prime }\setminus X_{i})\cup
Y_{i+1}$ and $I_{i+1}^{\prime \prime }:=(I_{i+1}\setminus Y_{i+1})\cup X_{i}$
are independent. Let $I=X_{k}$. It is easy to see that conditions (1) and
(2) are satisfied.
\end{proof}

We are ready to prove the following generalization of the theorem of Seymour.

\begin{theorem}
\label{main}Let $\ell$ be a given list-size function on the ground set $E$ of a
matroid $M$. If $M$ is $\mathbf{w}$-colorable from lists of the form $%
L(e)=\{1,2,\dots ,\ell(e)\}$, $e\in E$, then $M$ is on-line $(\mathbf{w},\ell)$%
-colorable.
\end{theorem}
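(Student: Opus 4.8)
The plan is to reformulate the hypothesis as the existence of a suitable cover by independent sets, and then to let Alice play by repeatedly invoking Lemma \ref{aaa}, all the while maintaining an invariant that ties the indices of the cover to the number of colors each element may still receive.

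First I would translate the language of $\mathbf{w}$-colorings into covers by independent sets. A $\mathbf{w}$-coloring $W$ with $W(e)\subseteq\{1,\dots,\ell(e)\}$ is the same datum as a collection $I_{1},\dots,I_{N}$ (with $N=\max_{e}\ell(e)$) given by $I_{s}=\{e:s\in W(e)\}$: each $I_{s}$ is independent precisely because $W$ is a $\mathbf{w}$-coloring, each $e$ lies in exactly $\mathbf{w}(e)$ of them, so they form a $\mathbf{w}$-cover of $E$, and moreover $e\in I_{s}$ forces $s\leq\ell(e)$. Thus the hypothesis provides a $\mathbf{w}$-cover $I_{1},\dots,I_{N}$ by independent sets satisfying the \emph{index bound} $e\in I_{s}\Rightarrow s\leq\ell(e)$. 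This bound, with $\ell$ replaced at each stage by the number of colors still to be offered to each element, is the invariant I want to preserve throughout the game.

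Alice's strategy is as follows. Before round $j$ she holds a $\mathbf{w}^{(j-1)}$-cover $I_{1},\dots,I_{N}$ by independent sets satisfying $e\in I_{s}\Rightarrow s\leq\ell(e)-a_{j-1}(e)$, where $\mathbf{w}^{(j-1)}(e)=\mathbf{w}(e)-\#\{\text{colors already given to }e\}$ is the weight still owed to $e$, and $a_{j-1}(e)=|\{i<j:e\in B_{i}\}|$ counts how often $e$ has been offered a color. When Bob reveals $B_{j}$, Alice applies Lemma \ref{aaa} with $V=B_{j}$, obtaining an independent set $I\subseteq B_{j}$ together with a new $(\mathbf{w}^{(j-1)}-\mathbf{c}_{I})$-cover $I_{1}',\dots,I_{N}'$, and she colors the elements of $I$ by color $j$. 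This is a legal move, since $I$ is independent, $I\subseteq B_{j}$, and color $j$ has never been used before; the updated owed weight is exactly $\mathbf{w}^{(j)}=\mathbf{w}^{(j-1)}-\mathbf{c}_{I}$. Condition (2) of Lemma \ref{aaa} is what makes the invariant survive: if $e\in I_{s}'$ then $e\in I_{t}$ for some $t\geq s+\mathbf{c}_{B_{j}}(e)$, and by the old invariant $t\leq\ell(e)-a_{j-1}(e)$, whence
\[
s\leq \ell(e)-a_{j-1}(e)-\mathbf{c}_{B_{j}}(e)=\ell(e)-a_{j}(e),
\]
which is precisely the invariant for the next round. The piece of cleverness is that the shift $s+\mathbf{c}_{V}(e)$ in condition (2) matches the drop $a_{j}(e)=a_{j-1}(e)+\mathbf{c}_{B_{j}}(e)$ in remaining offers, uniformly over whether or not $e$ was offered color $j$. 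I expect this index bookkeeping, together with the initial dictionary between $\mathbf{w}$-colorings and $\mathbf{w}$-covers, to be the only real content; everything else is formal.

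Finally I would close the game. Once all lists have reached size $\ell$, every $e$ has been offered a color in exactly $\ell(e)$ rounds, so the invariant reads $e\in I_{s}\Rightarrow s\leq 0$, forcing every set of the terminal cover to be empty. Hence the terminal owed weight is $|\{s:e\in I_{s}\}|=0$ for every $e$, which means Alice has assigned exactly $\mathbf{w}(e)$ colors to each $e$, all drawn from its list. Therefore Alice has produced a $\mathbf{w}$-coloring of $M$ from the lists of size $\ell$, so $M$ is on-line $(\mathbf{w},\ell)$-colorable.
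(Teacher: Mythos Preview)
Your proof is correct and follows essentially the same approach as the paper: both translate the hypothesis into a $\mathbf{w}$-cover by independent sets satisfying the index bound $e\in I_s\Rightarrow s\le\ell(e)$, and then apply Lemma~\ref{aaa} once per round to produce Alice's move while preserving this bound with $\ell(e)$ replaced by the number of offers still due to $e$. The paper phrases this as an induction on $\mathbf{w}(E)$ whereas you unroll it as an explicit iterative invariant, but the content is identical.
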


\begin{proof}
We prove it by the induction on the number $\mathbf{w}(E)=\sum_{e\in E}%
\mathbf{w}(e)$. If $\mathbf{w}(E)=0$, then $\mathbf{w}$ is the zero vector
and the assertion holds trivially. Suppose now that $\mathbf{w}(E)\geq 1$
and the assertion of the theorem holds for all $\mathbf{w}^{\prime }$ with $%
\mathbf{w}^{\prime }(E)<\mathbf{w}(E)$. Let $V\subseteq E$ be the set of
elements picked by Bob in the first round of the game. So, all elements of $V
$ have color $1$ in their lists. Let $I_{1},\dots ,I_{k}$ be a $\mathbf{w}$%
-coloring of $M$ which exists by the assumption. By Lemma \ref{aaa}, there
exist independent sets $I\subseteq V$ and $I_{1}^{\prime },\dots
,I_{k}^{\prime }$, such that $I_{1}^{\prime },\dots ,I_{k}^{\prime }$ is a $(%
\mathbf{w}-\mathbf{c}_{I})$-cover of $E$. Now Alice colors all elements from 
$I$ with color $1$. By condition (2) of Lemma \ref{aaa}, matroid $M$ is $(%
\mathbf{w}-\mathbf{c}_{I})$-colorable from lists $L^{\prime }(e)=\{1,2,\dots
,\ell(e)-\mathbf{c}_{V}(e)\}$. The assertion of the theorem follows by
induction.
\end{proof}

Observe that the condition from the assumption of Theorem \ref{main} is not
only sufficient, but also a necessary for a matroid to be on-line $(\mathbf{w}%
,\ell)$-colorable. 

Taking $\mathbf{w}=(1,1,\ldots ,1)$ and $l=(k,k,\ldots ,k)$,
with $k=\chi (M)$, we get immediately Theorem \ref{equal}. Theorem \ref{main} 
is an on-line generalization of Theorem 3 form \cite{la}. Let us mention one of 
this off-line consequences.

\begin{corollary}
If $M$ is colorable from lists of the form $L(e)=\{1,2,\dots ,\ell(e)\}$, $e\in
E$, then $M$ is colorable from any lists of size $\ell$.
\end{corollary}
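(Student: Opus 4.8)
The plan is to derive this Corollary as the off-line specialization of Theorem~\ref{main}. The key observation is that ordinary list colorability is precisely the special case of on-line $(\mathbf{w},\ell)$-colorability in which $\mathbf{w}=(1,1,\dots,1)$ and Bob is forced to reveal \emph{all} colors at once, rather than one at a time; since Alice's on-line task is strictly harder than her off-line task, any winning strategy for the on-line game immediately yields an off-line coloring. Thus the entire content of the Corollary is already packaged inside Theorem~\ref{main}, and I would simply instantiate the weight function and invoke it.

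Concretely, first I would set $\mathbf{w}=(1,1,\dots,1)$, so that a $\mathbf{w}$-coloring $W$ assigns to each $e$ a set $W(e)$ of size exactly one whose single color forces a proper coloring; in this case a $\mathbf{w}$-coloring is nothing but an ordinary proper coloring, and $\mathbf{w}$-colorability from lists $L$ coincides with ordinary colorability from $L$. Second, I would note that the hypothesis of the Corollary is exactly the hypothesis of Theorem~\ref{main} under this choice of $\mathbf{w}$: namely that $M$ admits a (proper) coloring from the initial-segment lists $L(e)=\{1,2,\dots,\ell(e)\}$. Theorem~\ref{main} then asserts that $M$ is on-line $(\mathbf{w},\ell)$-colorable.

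The final step is to extract the off-line conclusion from the on-line one. Given arbitrary lists $L'$ with $|L'(e)|=\ell(e)$, I would have Bob play the game by revealing, in a single opening round or in any fixed order, the colors so that element $e$ ends up with exactly the list $L'(e)$; because Alice wins the on-line game against every strategy of Bob, in particular she wins against this one, and her resulting assignment is a proper coloring $c$ with $c(e)\in L'(e)$ for every $e$. Hence $M$ is colorable from the lists $L'$, which is the assertion.

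I expect the only delicate point to be the bookkeeping in this last step: one must check that Bob can realize an \emph{arbitrary} target list $L'(e)$ of size $\ell(e)$ through a legal sequence of moves, and that an on-line win really produces a single-color-per-element proper coloring once $\mathbf{w}=(1,1,\dots,1)$. Both are routine once the definitions are unwound, since the on-line game places no constraint on which colors Bob inserts beyond the final list sizes, and a $(1,\dots,1)$-coloring is by definition a proper coloring. So the substantive work is entirely in Theorem~\ref{main}, and the Corollary follows by specialization with essentially no extra argument.
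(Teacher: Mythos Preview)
Your proposal is correct and matches the paper's approach: the paper presents this Corollary (without a separate proof) as an immediate off-line consequence of Theorem~\ref{main}, and your specialization to $\mathbf{w}=(1,\ldots,1)$ together with the observation that an on-line win against the Bob strategy that reveals the target lists color-by-color yields an ordinary list coloring is exactly the intended derivation. The only minor wording slip is that Bob cannot reveal ``all colors at once'' in a single round (each round carries one color), but your alternative ``in any fixed order'' is the correct formulation and suffices.
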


\section*{Acknowledgements}

We would like to thank Jarek Grytczuk for many inspiring conversations, and
additionally for the help in preparation of this manuscript.

\end{document}